\documentclass[12pt]{article}

\usepackage{pgfplots}
\usepackage{times}
\usepackage{amsmath,amsfonts, amstext,amssymb,amsbsy,amsopn,amsthm}
\usepackage{dsfont}
\usepackage{esint}
\usepackage{graphicx}   
\usepackage{hyperref}
\usepackage[all]{xy}
\usepackage{color}
\usepackage{tikz}
\usepackage{lineno,hyperref}

\usepackage{pgfplots}

\newtheorem{definition}{Definition}[section]
\newtheorem{theorem}[definition]{Theorem}

\theoremstyle{remark}
\newtheorem{remark}[definition]{Remark}
\numberwithin{equation}{section}

\newcommand{\abs}[1]{\lvert#1\rvert}

\newcommand{\norm}[1]{\lVert#1\rVert}

\newcommand{\R}{\mathbb{R}}
\newcommand{\cL}{\mathcal{L}}
\newcommand{\lap}{\mbox{$\triangle$}}
\newcommand{\ma}[1]{(\partial_ t - \triangle)^{#1}}

\newcommand{\fr}{\displaystyle\frac}
\newcommand{\jf}{\displaystyle\int}

\newcommand{\mb}{\mbox}

\newcommand{\be}{\begin{equation}}
\newcommand{\ee}{\end{equation}}
\newcommand{\bee}{\begin{equation*}}
\newcommand{\eee}{\end{equation*}}

\begin{document}

\title{  A convergence result for the master operator}

\author{Wenxiong Chen, Yahong Guo,
 Congming Li, Yugao Ouyang}
\date{}
\maketitle
\begin{abstract}
In this paper, we establish a  convergence result for the fully fractional heat operator $\ma{s}$, also known as the master operator,  stated as follows:
\[\mbox{If\ }u_i\to u\ \mbox{in}\ C^{2,1}_{x,t,loc}(\R^n\times\R),\ \mbox{then}\ \ma{s} u_i\to \ma{s}u-b\ \mbox{a.e. in}\ \R^n\times\R,\]
for some nonnegative constant  $b$. This result addresses a fundamental question in the blow-up and rescaling analysis, which are essential for establishing a priori estimates for
solutions of  master equations.

Additionally, we present examples demonstrating that in certain cases, the constant $b$ can indeed be positive. This highlights a key distinction between nonlocal and local operators: for a local heat operator, such as $\partial_t - \lap$, it is well-known that $b \equiv 0$. \end{abstract}
\smallskip

\textbf{Mathematics subject classification (2020): }  35R11; 35K99; 47G30.
\medskip

\textbf{Keywords:}  fully fractional heat operator, blowing up and re-scaling, convergence.     \\
\medskip

\section{Introduction}

It is well-known that obtaining a priori estimate for solutions of certain equations, such as
\be \label{a1}  \mathcal{L} v(x) = f(v(x)), \;\; x \in \Omega \subset \R^n, \ee
often relies on blow-up and rescaling analysis, a standard yet powerful technique. During this process, the rescaled sequence $\{u_i\}$, along with its derivatives up to the same order as the operator $\mathcal{L}$, can be made to converge on any compact set to a function $u$ and its corresponding derivatives.

A crucial question then arises:

\centerline{\em Does  $\mathcal{L} u_i(x)$  converges to  $\mathcal{L} u(x)$ pointwise in $\R^n$? }

If the answer is affirmative, one can derive a limiting equation and potentially reach a contradiction with known nonexistence result, leading to the desired a priori estimate for solutions of original equation \eqref{a1}.

For the classical Laplace operator $\Delta$, the answer is positive.  Specifically,
\[\mbox{if\ }u_i\to u\ \mbox{in}\ C^2_{loc}(\R^n)\ \mbox{as}\ i\to\infty,\ \mbox{then}\ \Delta u_i(x) \to \Delta u(x)\ \mbox{ pointwise in } \R^n.\]

However, this property fails for the fractional Laplacian $(-\Delta)^s$ (see \cite{DXJY}), defined as
$$ (-\Delta)^s u(x)=C_{n, s} \,\text{P.V.}\int_{\mathbb{R}^n}\frac{u(x)-u(y)}{|x-y|^{n+2s}}dy,$$
where $0<s<1$ and P.V. denotes the Cauchy principal value.

This breakdown is primarily due to the nonlocal nature of the fractional Laplacian, where the interaction between points across arbitrarily large distances prevents the pointwise  convergence of fractional Laplacian of the sequence $(-\lap)^s u_i(x)$  to that of the limit function $(-\lap)^s u(x)$.

Observe  that the singular integral defining the fractional Laplacian consists of two essential components: one over the ball of radius $R$,
denoted as  $B_R$, and the other over its complement $B_R^c$. The condition
\be \label{a2} u_i\to u\ \mbox{in}\ C^2_{loc}(\R^n) \ee
ensures only the convergence of the first part,
$$ \int_{B_R}\frac{u_i(x)-u_i(y)}{|x-y|^{n+2s}}dy \to \int_{B_R}\frac{u(x)-u(y)}{|x-y|^{n+2s}}dy.$$

However, this condition provides no information regarding the convergence of the second part,
$$ I_i \equiv \int_{B_R^c}\frac{u_i(x)-u_i(y)}{|x-y|^{n+2s}}dy.$$

When the original domain $\Omega$ is unbounded, even  under condition \eqref{a2}, the sequence $\{u_i(x)\}$ may become unbounded near infinity, while its point-wise limit $u(x)$ remains  bounded or smooth.

In \cite{DXJY}, the authors examined  this phenomenon and, under appropriate regularity assumptions, prove that

If $\{ u_i \}$ is a sequence of nonnegative functions converging to a function $u \in \mathcal{L}^{2s} (\R^n)$, and $\{(-\lap)^s u_i\}$ converges pointwisely in $\R^n$, then there exists a constant $b\geq 0$ such that
\be \label{a3} \lim_{i \to \infty} (-\lap)^s u_i(x) = (-\lap)^s u(x) - b, \;\; \forall \, x \in \R^n. \ee

They also provide  an example demonstrating  that the constant $b$ can be nonzero in certain cases. Here
$$ \mathcal{L}^{2s} (\R^n) \equiv \{ u \in L^1_{loc}(\R^n) \mid \int_{\R^n} \frac{|u(x)|}{1 + |x|^{n+2s}} d x < \infty \}$$
is a space of slowly increasing functions, and $u \in C^{1,1}_{loc}(\R^n) \cap \mathcal{L}^{2s}$ is a condition to ensure the convergence of the singular integral defining the fractional Laplacian.

\medskip

In this paper, we will investigate this kind of convergence phenomenon for the fully fractional heat operator $\ma{s}$ defined as

    \begin{equation}\label{nonlocaloper}
\ma{s}u(x,t):=c_{n,s}\,\text{P.V.}\int _{-\infty}^t\int _{\R ^n}\frac{u(x,t)-u(y,\tau)}{(t-\tau)^{\frac{n}{2}+1+s}}e^{-\frac{\abs{x-y}^2}{4(t-\tau)}}dyd\tau,
    \end{equation}
where $s\in(0,1),$ P.V. stands for the  Cauchy principal value, and $c_{n,s}$ is
the normalization constant. Hereafter, the P.V. notation is omitted for brevity, with all integrals understood in the Cauchy principal value sense.

The operator $\ma{s}$, also known as the  mater operator, was initially introduced by M. Riesz in \cite{Riesz}.
It is a space-time nonlocal pseudo-differential operator of order $2s$ in the spatial variable $x$ and of order $s$ in the time variable $t$, as the value of $(\partial_t-\Delta)^s u$ at any point $(x,t)$ depends on the values of $u$ over the entire spatial domain $\mathbb{R}^n$ and for all  past times up to $t$.

This operator plays a significant role in various physical and biological applications, including anomalous diffusion \cite{KBS}, chaotic dynamics \cite{Z}, and biological invasions \cite{BRR}. One notable application of the master equation is to describe continuous-time random walks, where \( u \) represents the distribution of particles undergoing random jumps along with random time delays (cf. \cite{MK}).

It is worth noting that the fractional powers of heat operator $(\partial_t-\Delta)^s$
reduces to the classical heat operator $\partial_t-\Delta$ as $s\rightarrow 1$ (cf. \cite{FNW}) in the sense that, for each $(x,t)$,
$$ (\partial_t-\Delta)^s u(x,t) \to (\partial_t-\Delta) u(x,t), \;\; \mbox{ as } s \to 1.$$

Moreover,
when the space-time nonlocal operator $(\partial_t-\Delta)^s$ is applied to a function $u$ depending only on the spatial variable $x$, it simplifies to
 \begin{equation*}
   (\partial_t-\Delta)^s u(x)=(-\Delta)^s u(x),
 \end{equation*}
where $(-\Delta)^s$ is the widely recognized fractional Laplacian. This operator is of greater interest due to its diverse applications across various scientific disciplines. In recent decades, considerable attention has been dedicated to study the fractional elliptic equations and a series of fruitful results have been obtained.  Interested readers may refer to \cite{CLL, CLL1, CLM, CZhu, DLL,  DQ, DY,GP, H, HZ, LZhuo, LZ, ZZZ} and references therein.

While in the special case when $u$ depends only on the time variable $t$, we have:
 \begin{equation*}
   (\partial_t-\Delta)^s u(t)=\partial_t^s u(t),
 \end{equation*}
where $\partial_t^s$ denotes the Marchaud fractional derivative of order $s$, defined as \begin{equation}\label{1.00}
\partial^s_t u(t)=C_s \jf_{-\infty}^t\fr{u(t)-u(\tau)}{(t-\tau)^{1+s}}d\tau.
\end{equation}
This derivative arises in various physical phenomena, such as
particle systems with sticking and trapping effects, magneto-thermoelastic heat conduction, plasma turbulence and more (cf. \cite{ACV,ACV1, DCL1, DCL2, EE}).

To ensure that the singular integral in \eqref{nonlocaloper} converges,  we assume
$$u(x,t)\in C^{2s+\epsilon,s+\epsilon}_{x,\, t,\, {\rm loc}}(\mathbb{R}^n\times\mathbb{R}) \cap \mathcal{L}^{2s,s}(\mathbb{R}^n\times\mathbb{R})$$
for some $\varepsilon >0$, where $C^{2s+\epsilon,s+\epsilon}_{x,\, t,\, {\rm loc}}(\mathbb{R}^n\times\mathbb{R})$ represents the local parabolic H\"{o}lder space, as defined in \cite{CGL}, consisting of functions which are locally $C^{2s+\epsilon}$ in the spatial variable $x$ and $C^{s+\epsilon}$ in the temporal variable $t$.
While the slowly increasing function space $\mathcal{L}^{2s,s}$ is defined by
$$\mathcal{L}^{2s,s}(\mathbb{R}^n\times\mathbb{R})=\left\{u(x,t) \in L^1_{\rm loc} (\mathbb{R}^n\times\mathbb{R}) \mid \int_{-\infty}^{+\infty} \int_{\mathbb{R}^n} \frac{|u(x,t)|}{1+|x|^{n+2+2s}+|t|^{\frac{n}{2}+1+s}}\operatorname{d}\!x\operatorname{d}\!t<\infty\right\}. $$

If
$$u(x,t)\in C^{2s+\epsilon,s+\epsilon}_{x,\, t,\, {\rm loc}}(\mathbb{R}^n\times\mathbb{R}) \cap \mathcal{L}^{2s,s}(\mathbb{R}^n\times\mathbb{R})$$
and satisfies the equation
\be \label{a5} \ma{s} u(x,t) = f(x,  u(x,t)), \;\; (x,t) \in \R^n \times \R \ee
pointwise, then we call it the classical solution (and simply the solution) of \eqref{a5}.

One of the major motivations for the present paper comes from establishing the a priori estimate for positive solutions to \eqref{a5}, or in the case $1/2 <s <1$, for more general master equations of the form
 \begin{equation} \label{a6}
 (\partial_t - \Delta)^s u(x,t) = b(x)|\nabla_x u(x,t)|^q + f(x, u(x,t)), \;\; (x,t) \in \mathbb{R}^n \times \mathbb{R},
 \end{equation}
where $\nabla_x u$ is the gradient of $u$ with respect to $x$.

Consider \eqref{a5} as an example. To show that there exists a constant $C$, independent of $u$,  such that
\be \label{a7} u(x,t) \leq C, \;\; \forall \, (x,t) \in \R^n \times \R, \ee
an effective approach is via blow-up and rescaling argument briefly as follows.

Suppose \eqref{a7} is violated, then there exist a sequence of solutions $u_k$ of \eqref{a5} and a sequence of points  $(x^k, t_k)$ such that
$$u_k(x_k, t_k) \to \infty, \mbox{ as } k \to \infty. $$

After a proper rescaling,
$$
   v_k(x,t)=\frac{1}{M_k}u_k(\lambda_kx+\bar{x}_k, \lambda_k^2 t + \bar{t}_k),
 $$
 we can derive that, for any $R>0$, $v_k$ is uniformly bounded in
 $$Q_R:=\{(x,t)\in \R ^n\times\R:\abs{x}\leq R, \abs{t}\leq R^2\},$$
 the standard parabolic cylinder of size $R$ centered at $(0,0)$.
Furthermore, it can be deduced from \eqref{a5} that $v_k$ solves
\be \label{eq4-8}
(\partial_t -\Delta)^s v_k(x,t) = F_k(x, v_k(x,t)),\,\, (x,t)\in Q_{R}
\ee
with $F_k$ being uniformly H\"{o}lder continuous. Applying our new interior regularity estimates (see \cite{CGL}), we can conclude that
the sequence $\{v_k(x,t)\}$ converges in $C^{2s+\epsilon, s+\epsilon}_{x,t}$ to a function $v(x,t)$ on any compact subset of $\R^n \times \R$. Nonetheless, during the rescaling process, as is often the case, no information is available about $\{v_k(x,t)\}$ outside $Q_R$, and the sequence may become unbounded near infinity.

In order to arrive at a limiting equation, it is critical and fundamental to determine whether $(\partial_t - \lap)^s v_k(x,t)$ converges and to identify the limit. We address this question in the following theorem, which constitutes the main result of this paper.  For convenience, we denote the sequence $\{v_k\}$ by $\{u_i\}$ in what follows.

\begin{theorem}\label{main thm}
    Let $n\ge 1$, $s\in (0,1)$ and $\varepsilon >0$ sufficiently small. Suppose that
    $$\{u_i\}\subset \mathcal{L}^{2s,s} (\R ^n\times\R)\cap C^{2s+\varepsilon,s+\frac{\varepsilon}{2}} _{x,t}(Q_{R_i})$$ is a sequence of nonnegative functions, with $\{R_i\}$ be a sequence of positive numbers converging to $+\infty$. If $\{u_i\}$ converges in $C^{2s+\varepsilon,s+\frac{\varepsilon}{2}} _{x,t,loc}(\R ^n\times\R)$ to a function $u\in \mathcal{L}^{2s,s} (\R ^n\times\R)$,  $\{\ma{s}u_i\}$ converges pointwise in $\R ^n\times\R$,  and  there exists a universal constant $M>0$ such that for all $(x,t)\in\R^n\times\R,$
\begin{equation}\label{bdd}
 \liminf\limits_{i\to\infty}\ma{s}[u_i-u](x,t)\geq -M,
   \end{equation}then there exists a nonnegative constant $b$ such that
\begin{equation}
      \lim _{i\to\infty}\ma {s}u_i(x,t) = \ma{s}u(x,t)-b, \quad \forall\,(x,t)\in  \R ^n\times\R.
\end{equation}
Moreover,
\[
b = c_{n,s}\lim _{R\to\infty}\lim_{i\to\infty}\int _{\left(\R^n\times(-\infty,0)\right)\setminus Q_R}\dfrac{u_i(y,\tau)}{|\tau|^{\frac{n}{2}+1+s}}e^{\frac{\abs{y}^2}{4|\tau|}}dyd\tau,\,\,(\text{the limit exists and is finite}).
\]
\end{theorem}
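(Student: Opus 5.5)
Set $w_i=u_i-u$. Then $w_i\to0$ in $C^{2s+\varepsilon,s+\varepsilon/2}_{x,t,loc}$, and the claim becomes: $\ma{s}w_i(x,t)\to -b$ for every $(x,t)$, with one constant $b\ge0$. We fix $(x,t)$ and $R>0$ large enough that $(x,t)\in Q_{R/2}$. The defining integral of $\ma{s}w_i(x,t)$ is split into the part over $Q_R$ and the part over its complement (intersected with $\{\tau<t\}$). On $Q_R$ the principal value is controlled by the local H\"older norm: symmetrize in $y$ about $x$ and subtract $w_i(x,t)$, so the integrand is dominated by $\|w_i\|_{C^{2s+\varepsilon,s+\varepsilon/2}(Q_R)}$ times an integrable kernel. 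Hence this part tends to $0$ as $i\to\infty$ for each fixed $R$. In the exterior part, the term $w_i(x,t)\int_{Q_R^c}K\,dy\,d\tau$ also tends to $0$, because $w_i(x,t)\to0$ and the heat-kernel weight is integrable away from the singularity. So everything reduces to the far-field quantity
\[
J_i^R(x,t)= -c_{n,s}\int_{Q_R^c\cap\{\tau<t\}}\frac{w_i(y,\tau)}{(t-\tau)^{\frac n2+1+s}}e^{-\frac{|x-y|^2}{4(t-\tau)}}dy\,d\tau .
\]
Since $u\in\mathcal L^{2s,s}$, the $u$-part of $J_i^R$ tends to $0$ as $R\to\infty$, uniformly in $i$. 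We therefore need to understand $\lim_R\lim_i$ of the $u_i$-part, and in particular that it does not depend on $(x,t)$.

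\textbf{Independence of the base point.} The hardest step, as in \cite{DXJY}, is to show that for $(y,\tau)$ far out, with $|y|\ge R$ or $|\tau|\ge R^2$ and $R\to\infty$, the kernel ratio
\[
\frac{(t-\tau)^{-\frac n2-1-s}e^{-|x-y|^2/4(t-\tau)}}{|\tau|^{-\frac n2-1-s}e^{-|y|^2/4|\tau|}}
\]
tends to $1$ uniformly on the region that actually carries mass. This is where the master operator differs genuinely from the fractional Laplacian. The ratio is not uniformly close to $1$: in the region $|y|^2\gg|\tau|$ the exponentials differ by a factor like $e^{x\cdot y/2|\tau|}$, which is unbounded. (Note that the weight in the stated formula for $b$ is presumably meant to be $e^{-|y|^2/4|\tau|}$.)

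To deal with this, I would first extract from nonnegativity together with \eqref{bdd} a uniform bound
\[
\int_{Q_R^c} u_i\,K_{(x,t)}\,dy\,d\tau\le C,
\]
with $C$ independent of $i$, valid at every base point, including base points with very negative $t$ and with $x$ shifted. Comparing kernels at two base points then lets the mass in the bad parabolic region, $|y|\gtrsim|\tau|^{1/2+\delta}$, be dominated by a kernel at another base point times a decaying factor. Nonnegativity of $u_i$ is what makes this domination legitimate. Concretely, I would split $Q_R^c$ into the region $|y|\le A|\tau|^{1/2}$, where the ratio is $1+o(1)$ as $R\to\infty$ for each fixed $A$, and its complement, whose contribution I would show is small as $A\to\infty$, uniformly in $i$, via the bound above. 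This is the step I expect to be the main obstacle. I am unsure whether \eqref{bdd} alone suffices, or whether a Harnack-type comparison between base points is needed.

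\textbf{Existence of the limit and conclusion.} After a diagonal subsequence, $\lim_i\int_{Q_R^c}u_iK$ exists for countably many $R$, since $\lim_i\ma{s}u_i$ exists by hypothesis. The quantity is monotone in $R$ up to $o(1)$ terms, because $u_i\ge0$ and the inner parts converge. Hence the double limit $b$ exists, is finite by \eqref{bdd}, and is nonnegative because $u_i\ge0$. Combining the previous steps gives
\[
\lim_i\ma{s}u_i(x,t)=\ma{s}u(x,t)-b
\]
with the same $b$ at every $(x,t)$. Finally, since the full limit $\lim_i\ma{s}u_i$ exists by assumption, the answer does not depend on the chosen subsequence, so no subsequence is needed.
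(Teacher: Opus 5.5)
Your reduction to the far-field term, the treatment of the local part and of the $u$-part, and the existence and sign of $b$ essentially match the paper. You are also right that the weight in the formula for $b$ should be $e^{-|y|^2/4|\tau|}$.

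The gap is in the base-point independence, which is the actual content of the theorem. You leave it open, and the concrete plan you give fails because the threshold sits on the wrong scale. For fixed $(x,t)$ and $p=\frac n2+1+s$,
\[
\frac{M(x-y,t-\tau)}{M(-y,-\tau)}=\Big(\frac{|\tau|}{t-\tau}\Big)^{p}\exp\Big\{\frac{|y|^2\,t}{4|\tau|(t-\tau)}+\frac{2x\cdot y-|x|^2}{4(t-\tau)}\Big\}.
\]
This ratio is $1+o(1)$ as soon as $|\tau|\to\infty$ and $|y|/|\tau|\to0$, whatever $|y|^2/|\tau|$ is. So the factor $e^{x\cdot y/2|\tau|}$ is harmless in the zone $|\tau|^{1/2}\ll|y|\ll|\tau|$.

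Worse, your claim that the complement of $\{|y|\le A|\tau|^{1/2}\}$ contributes little as $A\to\infty$, uniformly in $i$, is false. Take $u=0$ and let $u_i$ be a smooth unit bump around $(y_i,-T_i)$ with $|y_i|/\sqrt{T_i}\to\infty$ and $|y_i|/T_i\to0$. Scale it by $c_i\approx T_i^{p}e^{|y_i|^2/4T_i}/c_{n,s}$. By the formula above, $\ma{s}u_i(x,t)\to-1$ at every $(x,t)$. All hypotheses hold and $b=1$, yet for every fixed $A$ the whole mass eventually lies in your ``bad'' region. The same happens for your variant $|y|\gtrsim|\tau|^{1/2+\delta}$ with any $\delta<\frac12$. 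That region can carry all of $b$, so no comparison of base points can make it negligible.

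What is missing is a cut on the \emph{linear} scale. With $\delta=R^{-1/3}$, split the exterior along the cone $|y-x|=\delta(t-\tau)$.

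Inside the cone, $t-\tau\gtrsim R^{4/3}$ and $|y|\le\delta(t-\tau)+|x|$, so the ratio above is $1+o(1)$.

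Outside the cone, necessarily $|y|\ge R$: if $|y|<R$ then $\tau<-R^2$, which would force $|y-x|>\delta R^2/2\gg R$. On the sector where $|y_j-x_j|=\max_k|y_k-x_k|$, with the sign $\pm$ equal to that of $y_j-x_j$,
\[
\frac{M(x-y,t-\tau)}{M(x\pm R^{2/3}e_j-y,t-\tau)}=\exp\Big\{-\frac{2R^{2/3}|y_j-x_j|-R^{4/3}}{4(t-\tau)}\Big\}\le e^{-cR^{1/3}}.
\]
By $u_i\ge0$, the $\limsup_i$ of the outside part is then at most $2n(M+1)e^{-cR^{1/3}}$.

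Here the hypothesis on $\liminf_i\ma{s}[u_i-u]$ enters only through $\sup_{Q_{R/2}}\lim_iF_i(\cdot,\cdot,R)\le M+1$. This bound holds only on $Q_{R/2}$, because it uses the uniform decay of the $E$-term there; it does not hold at every base point. That is why the shift is $R^{2/3}=o(R)$. It is also why comparison points are moved sideways or forward in time, never to very negative $t$: a kernel based at time $t'$ sees nothing with $\tau\ge t'$. No Harnack inequality is needed.

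The paper does exactly this for the spatial offset (regions $A_R,B_R,C_R$). It then treats the time offset separately (regions $C_R,\dots,F_R$, with the forward shift $t_0=R^{3/2}$). With the linear cut, your one-shot comparison with $(0,0)$ would cover both offsets at once.
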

\begin{remark}
    In the process of deriving a priori estimates for solutions to the master equation on unbounded domains using the blow-up and rescaling method, the condition \eqref{bdd}  is satisfied automatically (see \cite{CGL}).
\end{remark}
\begin{remark}
    In Theorem \ref{main thm}, we assume $u_i$ and $u$  belong to the  space $\mathcal{L}^{2s,s} (\R ^n\times\R)$ of tempered distributions. In fact, our result remains valid under the weaker assumption that $u_i\in\cL (\R ^n\times\R)$ with the limiting function $u\in \mathcal{L}^{2s,s} (\R ^n\times\R)$.
    Here, $$ \mathcal{L}(\mathbb{R}^n\times\mathbb{R}):=\left\{u(x,t) \in L^1_{\rm loc} (\mathbb{R}^n\times\mathbb{R}) \mid \int_{-\infty}^t \int_{\mathbb{R}^n} \frac{|u(x,\tau)|e^{-\frac{|x|^2}{4(t-\tau)}}}{1+(t-\tau)^{\frac{n}{2}+1+s}}\operatorname{d}\!x\operatorname{d}\!\tau<\infty,\,\, \forall \,t\in\mathbb{R}\right\}$$
is  typically used  to define the classical solutions to master equations. Although the space $\cL (\R ^n\times\R)$ may seem broader than $\mathcal{L}^{2s,s} (\R ^n\times\R)$, the latter is a natural generalization of $\cL^{2s}(\R^n)$ associated with the fractional Laplacian.

More specifically,  if $u$ is a function of $x$ only, i.e., $u(x,t)=u(x)$,  a straightforward calculation shows:
\[\int_{-\infty}^{+\infty}\int_{\mathbb{R}^n} \frac{|u(x)|}{1+|x|^{n+2+2s}+|t|^{\frac{n}{2}+1+s}}\operatorname{d}\!x\operatorname{d}\!t\sim\int_{\mathbb{R}^n} \frac{|u(x)|}{1+|x|^{n+2s}}\operatorname{d}\!x.\]
This relation highlights that the space $\mathcal{L}^{2s,s}(\mathbb{R}^n\times\mathbb{R})$ is consistent with the space
$\mathcal{L}^{2s}(\R^n)$.

From this perspective, Theorem \ref{main thm} generalize the known results on the fractional Laplacian in \cite{DXJY}. However, this is by no means a straight forward generalization. Our situation here is significantly more intricate due to the interplay between space and time variables, as we will illustrate shortly.
\end{remark}

\begin{remark}
As we mentioned earlier, if $u$ depends only on $t$, then the master operator reduces to the Marchaud fractional time derivative
$$(\partial_t - \lap)^s u(t) = \partial_t^s u(t).$$
Thus, our result applies to the Marchaud derivatives and still remains novel.
\end{remark}
\medskip

\leftline{\bf The difficulties and our novel approaches}
\medskip

As will be shown in more details in the next section, there are three terms in the difference
$$ \ma {s}u(x,t)-\ma {s}u_i(x,t) = I_i(x,t,R)+E_i(x,t,R)+F_i(x,t,R).$$
Through standard estimates, it is straight forward to show that
$$ I_i(x,t,R)\to 0,\,\,\text{as}\,\,i\to\infty$$ and
$$ \lim_{R\to\infty}\sup _{(x,t)\in Q_{R/2}}\abs{\lim_{i \to \infty} E(x,t,R)}=0.$$

Then what left is the dominant remainder term
$$F_i(x,t,R) :=  c_{n,s}\int _{\left(\R^n\times(-\infty,t)\right)\setminus Q_R} \frac{u_i(y,\tau)}{(t-\tau)^{\frac{n}{2}+1+s}}e^{-\frac{\abs{x-y}^2}{4(t-\tau)}}dyd\tau.$$  Our goal is to prove  that for $(x,t)\in Q_{{R}/{3}}$,
\[ F(x,t,R):=\lim\limits_{i\to\infty}F_i(x,t,R) \] converges to a  constant as $R\to \infty$.
\smallskip

    In the case of the fractional Laplacian, the corresponding term is
 $$F_i(x,R) := C_{n,s} \int_{B_R^c(0)} \frac{u_i(y)}{|x-y|^{n+2s}} d y.$$
 The exterior region is simple, so is the kernel function
 \[K(x):=\frac{C_{n,s}}{|x|^{n+2s}}.\]

 For each fixed $x\in B_R$ and for  all $y\in B_R^c$, the kernel  satisfies the following inequality
\[ \left(\frac{R}{R+|x|}\right)^{n+2s}\leq\frac{K(x-y)}{K(0-y)}=\frac{|y|^{n+2s}}{|x-y|^{n+2s}}\leq \left(\frac{R}{R-|x|}\right)^{n+2s}, \]
and hence
$$ \frac{K(x-y)}{K(0-y)} \to 1, \;\; \mbox{ uniformly as } R \to \infty. $$
It follows that
$$ \frac{F_i(x,R)}{F_i(0,R)} \to 1, \;\; \mbox{ as } R \to \infty.$$
From here it is easy to derive that, there is some constant $b$, such that
$$  (-\lap)^s u(x) - (-\lap)^s u_i(x) \to b, \;\; \mbox{ as } i \to \infty. $$

For our master equation, two major challenges arise:

\begin{enumerate}

\item {\em Complexity of the exterior region:} The set $\left(\R^n\times(-\infty,t)\right)\setminus Q_R$ has a complicated structure.

\item {\em Strong spatiotemporal correlation in the kernel function:} The kernel function   \[
    M(x,t): =\frac{c_{n,s}}{t^{\frac{n}{2}+1+s}}e^{-\frac{\abs{x}^2}{4t}},\quad\forall\,t>0, x\in\R ^n,
    \]
    exhibits a strong coupling between space and time variables.
 \end{enumerate}

To circumvent these difficulties, we introduced several novel techniques. Specifically,   we analyze the ratio of the kernel function $M(x-y,t-\tau)$ at pairs of well-chosen values of $(x,t)$ while   $(y,\tau)$ is varying in  the exterior  region $\left(\R^n\times(-\infty,t)\right)\setminus Q_R$. This approach allows us to establish sharp estimates for the ratio of  kernel functions across different  parts of the exterior  region.

To illustrate this approach, denote
$$b(x,t) = \lim_{i \to \infty} [\ma {s}u(x,t)-\ma {s}u_i(x,t)].$$
In the first step, we demonstrate that $b(\cdot, \cdot)$  is independent of $x$, then in the second step, we show that it is also independent of $t$.
\bigskip

In the proof of $b(x,t)=b(0,t)$, through careful deliberations, we chose proper hyper surfaces to divide the exterior region into three parts $A_R,B_R,$ and $C_R$. Figure \ref{fig:partition x} below illustrates this partition in the two-dimensional case, where $(x,t)$ is positioned at the origin, and $Q^{-}_R:=Q_R\cap \R^n_{-}$.

\begin{figure}[htbp]
    \centering
    \begin{tikzpicture}[scale=1.2]

        \fill[gray!20] (-1,0) -- (1,0) -- (1,-3)-- (-1,-3) -- cycle;
        \fill[blue!20] (-1,-3) -- (1,-3) -- (1,-4.5) -- (-1,-4.5) -- cycle;
        \fill[purple!20] (-1,-2) -- (-2,-4) -- (-2,-4.5) -- (-1,-4.5) -- cycle;
        \fill[purple!20] (1,-2) -- (2,-4) -- (2,-4.5) -- (1,-4.5) -- cycle;
        \fill[green!20] (-1,0) -- (-1,-2) -- (-2,-4) -- (-2,0) -- cycle;
        \fill[green!20] (1,0) -- (1,-2) -- (2,-4) -- (2,0) -- cycle;

        \node at (1.5,-1.6) {$A_R$};
        \node at (-1.5,-1.6) {$A_R$};
        \node at (-0.5, -3.6) {$C_R$};
        \node at (0.5, -1.6) {$Q_R^-$};
        \node at (-1.5, -3.6) {$B_R$};
        \node at (1.5, -3.6) {$B_R$};
        \node at (-2.1,-4.2) {$y=R^{-\frac{1}{3}}\tau$};
        \node at (2.1,-4.2) {$y=-R^{-\frac{1}{3}}\tau$};

        \draw[thick] (0, 0) -- (2, -4);
        \draw[thick] (0, 0) -- (-2, -4);
        \draw[thick] (-1, -3) -- (-1, -4.5);
        \draw[thick] (1,-3) -- (1, -4.5);
        \draw[dashed] (-1,0) -- (-1,-3) -- (1,-3) -- (1,0);

        \draw[->, thick] (-2.2, 0) -- (2.2, 0) node[right] {$y$};
        \draw[->, thick] (0, -5) -- (0, 0.5) node[above] {$\tau$};

    \end{tikzpicture}
    \caption{2-dim. case, partition of outer domain when $(x,t)=(0,0)$.}
    \label{fig:partition x}
\end{figure}
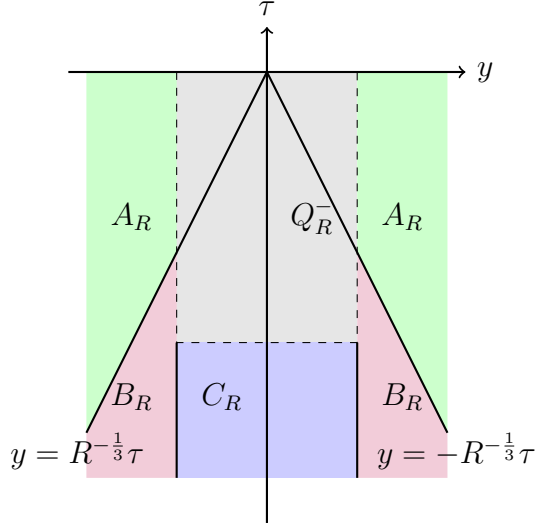
\bigskip

We establish the following estimates: \begin{equation*}
    \frac{M(x-y,t-\tau)}{\sum_{j=i}^nM(x\pm R^{\frac{2}{3}}{e_j}-y,t-\tau)}\leq e^{-c{R^{\frac{1}{3}}}}\to 0\,\,\text{as }R\to \infty, \quad \forall\, (y,\tau)\in A_R,
\end{equation*}
and
\begin{equation*}
    \frac{M(x-y,t-\tau)}{M(0-y,t-\tau)}\to 1\,\,\text{as}\ R\to\infty\ \mbox{ uniformly\ in}\   (y,\tau)\in B_R\cup C_R.
\end{equation*}\bigskip

Then  in the proof of  $b(0,t)=b(0,0)$, we  divide the exterior region into four parts $C_R,D_R,E_R,$ and $F_R$ (see Figure \ref{fig:partition t} for the two dimensional case).  We derive that
\begin{equation*}
    \frac{M(0-y,0-\tau)}{M(0-y,t-\tau)}\to 1\,\,\text{as}\ R\to\infty\ \mbox{ uniformly\ in}\  (y,\tau)\in C_R\cup D_R,
\end{equation*}
and
\[
\frac{M(0-y,t-\tau)}{M(0-y,t+R^{\frac{3}{2}}-\tau)}\leq{R^{-\frac{1}{2}}}\to 0\,\,\text{as }R\to \infty, \ \forall (y,\tau)\in E_R\cup F_R.
\]

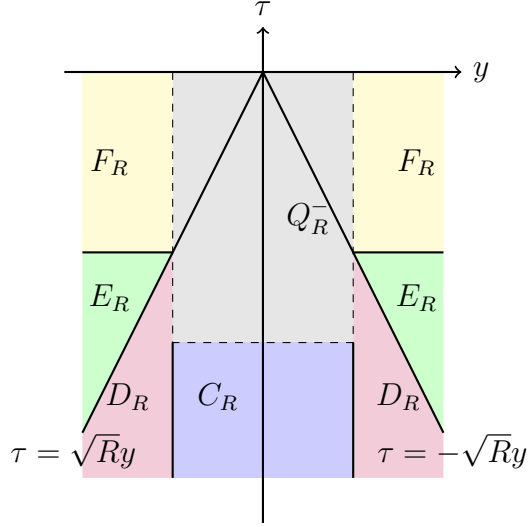
\begin{figure}[htbp]
    \centering
    \begin{tikzpicture}[scale=1.2]

        \fill[gray!20] (-1,0) -- (1,0) -- (1,-3)-- (-1,-3) -- cycle;
        \fill[blue!20] (-1,-3) -- (1,-3) -- (1,-4.5) -- (-1,-4.5) -- cycle;
        \fill[purple!20] (-1,-2) -- (-2,-4) -- (-2,-4.5) -- (-1,-4.5) -- cycle;
        \fill[purple!20] (1,-2) -- (2,-4) -- (2,-4.5) -- (1,-4.5) -- cycle;
        \fill[green!20] (-1,-2) -- (-2,-2) -- (-2,-4) -- cycle;
        \fill[green!20] (1,-2) -- (2,-2) -- (2,-4) -- cycle;
        \fill[yellow!20] (-2,0) -- (-1,0) -- (-1,-2) -- (-2,-2) -- cycle;
        \fill[yellow!20] (2,0) -- (1,0) -- (1,-2) -- (2,-2) -- cycle;

        \node at (1.7,-2.5) {$E_R$};
        \node at (1.7,-1) {$F_R$};
        \node at (-1.7,-1) {$F_R$};
        \node at (-1.7,-2.5) {$E_R$};
        \node at (-0.5, -3.6) {$C_R$};
        \node at (0.5, -1.6) {$Q_R^-$};
        \node at (-1.5, -3.6) {$D_R$};
        \node at (1.5, -3.6) {$D_R$};
        \node at (-2.1,-4.2) {$\tau=\sqrt{R}y$};
        \node at (2.1,-4.2) {$\tau=-\sqrt{R}y$};

        \draw[thick] (0, 0) -- (2, -4);
        \draw[thick] (0, 0) -- (-2, -4);
        \draw[thick] (-1, -3) -- (-1, -4.5);
        \draw[thick] (1,-3) -- (1, -4.5);
        \draw[thick] (-2,-2)--(-1,-2);
        \draw[thick] (1,-2)--(2,-2);
        \draw[dashed] (-1,0) -- (-1,-3) -- (1,-3) -- (1,0);

        \draw[->, thick] (-2.2, 0) -- (2.2, 0) node[right] {$y$};
        \draw[->, thick] (0, -5) -- (0, 0.5) node[above] {$\tau$};

    \end{tikzpicture}
    \caption{2-dim. case, partition of outer domain when $t=0$.}
    \label{fig:partition t}
\end{figure}

Moreover, in domains $A_R, E_R, F_R$, the  estimate  established in \eqref{Fibdd} (see the next section), namely,\begin{equation*}
         0\le \sup _{(x,t)\in Q_{R/2}}F(x,t,R)\le M+1 \text{ whenever }R>>1.
     \end{equation*}
plays a crucial role in our proof, where $M$ is the universal constant given in \eqref{bdd}.
\bigskip

We also provide some examples to show that the constant $b$ in Theorem \ref{main thm}  can indeed be positive.
\begin{theorem}\label{main thm2}
    There exists a sequence of nonnegative smooth functions \{$w_i$\} with compact support in $\R^n\times\R$ such that
    \[\lim\limits_{i\to\infty}w_i=0,\ \mbox{in\ the \ sense\ of}\ C_{x,t,loc}^{2,1}(\R^n\times\R),\]
    but\[\lim\limits_{i\to\infty}\ma{s}w_i(x,t)=-1,\ \forall (x,t)\in\R^n\times\R.\]
\end{theorem}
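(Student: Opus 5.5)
We construct $w_i$ as mass placed far away in the past: a smooth nonnegative bump sitting at a time $T_i\to\infty$ in the past and spread over a large spatial ball, with height tuned so that the nonlocal tail integral in \eqref{nonlocaloper} converges to $1$ at every point. Since $w_i$ vanishes near $(x,t)$ for large $i$, we will have $\ma{s}w_i(x,t)=-c_{n,s}\int\int_{\tau<t} w_i(y,\tau)M(x-y,t-\tau)\,dy\,d\tau$, where $M$ is the kernel from the introduction (without the constant). The whole task is therefore to make this integral tend to $1$ for every fixed $(x,t)$, while $w_i\to0$ in $C^{2,1}_{x,t,loc}$. The latter is automatic once the support of $w_i$ leaves every compact set.

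Concretely, fix $\eta\in C_c^\infty(\R^n)$, $\theta\in C_c^\infty(\R)$, both nonnegative and not identically zero, with $\operatorname{supp}\theta\subset(-2,-1)$. Set
\[
w_i(y,\tau)=a_i\,\eta\!\left(\frac{y}{\sqrt{T_i}}\right)\theta\!\left(\frac{\tau}{T_i}\right),\qquad T_i\to\infty,
\]
with amplitudes $a_i>0$ to be chosen. The support lies in $\{\tau\in(-2T_i,-T_i)\}$, so it eventually avoids any compact set, and this gives the $C^{2,1}_{loc}$ convergence to $0$. For fixed $(x,t)$ and $i$ large we have $t>-T_i$. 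Substituting $y=\sqrt{T_i}\,z$ and $\tau=T_i\sigma$, the kernel becomes
\[
T_i^{-\frac n2-1-s}\Big(\tfrac{t}{T_i}-\sigma\Big)^{-\frac n2-1-s}\exp\!\Big(-\frac{|x/\sqrt{T_i}-z|^2}{4(t/T_i-\sigma)}\Big),
\]
and the Jacobian is $T_i^{\frac n2+1}$. Hence
\[
\int\!\!\int w_iM = a_iT_i^{-s}\,J_i(x,t),\qquad J_i(x,t)=\int\!\!\int \eta(z)\theta(\sigma)\,(t/T_i-\sigma)^{-\frac n2-1-s}e^{-\frac{|x/\sqrt{T_i}-z|^2}{4(t/T_i-\sigma)}}dz\,d\sigma .
\]
Since $\sigma\in(-2,-1)$, the integrand is bounded and continuous in the parameters $(x/\sqrt{T_i},t/T_i)\to(0,0)$. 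By dominated convergence,
\[
J_i(x,t)\to J_0:=\int\!\!\int\eta(z)\theta(\sigma)|\sigma|^{-\frac n2-1-s}e^{-\frac{|z|^2}{4|\sigma|}}\,dz\,d\sigma\in(0,\infty).
\]
We then choose $a_i=T_i^{s}/(c_{n,s}J_0)$, which gives $\ma{s}w_i(x,t)\to-1$ for every $(x,t)$.

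The only point needing care is that the pointwise convergence holds at every $(x,t)$, not uniformly. This is fine because for each fixed point the error terms vanish as $i\to\infty$. We also need $(x,t)$ to lie outside the support, which holds once $T_i>|t|+1$. Finally, the $w_i$ are smooth with compact support, so $\ma{s}w_i$ is classically defined. The amplitudes $a_i\to\infty$ do not spoil local convergence, because $w_i\equiv0$ on any fixed compact set for $i$ large. This shows that $b=1>0$ occurs, consistent with Theorem~\ref{main thm}: the mass escapes along the parabolic scaling direction, which is exactly the region $C_R$.
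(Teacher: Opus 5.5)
Your construction is correct, and it takes a genuinely different route from the paper's.

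\textbf{The paper's route.} The paper takes $w_j=\frac{1}{C_0}\phi_j(x)\eta_j(t)$. Here $\phi_j(x)=j^{2s}\phi(|x|/j)$ is the fractional-Laplacian example, with mass escaping to spatial infinity, and $\eta_j(t)=(j^{-\gamma}t)_+^2+1$, $\gamma>s$, is a slowly varying nondecreasing time factor. It splits
$w_j(x,t)-w_j(y,\tau)=\eta_j(t)\bigl(\phi_j(x)-\phi_j(y)\bigr)+\bigl(\eta_j(t)-\eta_j(\tau)\bigr)\phi_j(y)$ (up to the factor $1/C_0$). The first part is identified with $\eta_j(t)(-\Delta)^s\phi_j(x)/C_0\to-1$. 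The second part is squeezed between $0$ and $C_0^{-1}j^{2s}\partial_t^s\eta_j(t)=O(j^{2s-2\gamma})$, using the monotonicity of $\eta$ and an explicit Marchaud derivative.

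\textbf{Your route.} You place a single parabolically rescaled bump in the far past. Then $\ma{s}w_i(x,t)$ is an absolutely convergent, purely negative tail integral, equal to $-J_i(x,t)/J_0$. The change of variables $y=\sqrt{T_i}z$, $\tau=T_i\sigma$ plus dominated convergence finishes the argument.

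\textbf{Comparison.} Your argument is shorter and needs neither two-sided bounds nor a reduction to the purely spatial example. It also delivers exactly what the statement asks for: your $w_i$ are $C^\infty$ with compact support. The paper's $w_j$ is supported on $\{2j\le|x|\le 3j\}\times\R$, so it is not compactly supported, and $\eta$ is only $C^{1,1}$ at $t=0$. In exchange, the paper's route shows how the purely spatial and purely temporal examples combine, and ties the phenomenon directly to the known fractional-Laplacian case.

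\textbf{Minor inaccuracy.} In your closing remark, for fixed $R$ and large $i$ the support of $w_i$ reaches $|y|\sim\sqrt{T_i}\gg R$. So the escaping mass sits in $C_R\cup D_R$ of Step~2 (or $B_R\cup C_R$ of Step~1), not in $C_R$ alone. This does not affect your proof.
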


\bigskip

This paper is organized as follows.
In Section 2, we prove the convergence for the fully fractional heat operator $\ma{s}$ as stated in Theorem \ref{main thm}.
In Section 3, we present some examples to demonstrate that the constant $b$ obtained in Theorem \ref{main thm} can be positive and thus establish Theorem \ref{main thm2}.
\section{Convergence}

    Throughout the proof, unless explicitly stated, $C, C_i,c_i,c $  to denote universal constants whose values may vary from line to line. We write $A\lesssim B$
if $A\le CB$ holds for some constant $C>0$,
possibly varying in different contexts.

\begin{proof}[Proof of Theorem\ref{main thm}]
    Fix $(x,t)\in \R^n\times \R$. Select $R\gg 3\max\{\sqrt{\abs{t}}, \abs{x}\}$ and consider sufficiently large $i$. We commence by decomposing the difference:
    \begin{align}
        &\ma {s}u(x,t)-\ma {s}u_i(x,t) \nonumber\\=&\int _{-\infty}^t\int _{\R^n}\left((u-u_i)(x,t)-(u-u_i)(y,\tau) \right)M(x-y,t-\tau)dyd\tau \nonumber\\
        =&\int _{-R^2}^t\int _{B_R}\left((u-u_i)(x,t)-(u-u_i)(y,\tau) \right)M(x-y,t-\tau)dyd\tau \nonumber\\
        +& \int _{\left(\R^n\times(-\infty,t)\right)\setminus Q_R} \left((u-u_i)(x,t)-u(y,\tau) \right)M(x-y,t-\tau)dyd\tau \nonumber\\
        +& \int _{\left(\R^n\times(-\infty,t)\right)\setminus Q_R} u_i(y,\tau)M(x-y,t-\tau)dyd\tau \nonumber\\
        :=& I_i(x,t,R)+E_i(x,t,R)+F_i(x,t,R)\label{1}.
    \end{align}

    Regarding the term $I_i$, for $2s+\varepsilon<1$,  for any $(y,\tau)\in B_R\times (-R^2,t)$, we have
    \[
    \left|(u-u_i)(x,t)-(u-u_i)(y,\tau)\right|\le [u-u_i]_{C^{2s+\varepsilon,s+\frac{\varepsilon}{2}}_{x,t}(Q_{R})}\left(|x-y|^{2s+\varepsilon} + (t-\tau)^{s+\frac{\varepsilon}{2}}\right).
    \]
    By changing of variables, we deduce
    \begin{align*}
        |I_i(x,t,R)|&\le [u-u_i]_{C^{2s+\varepsilon,s+\frac{\varepsilon}{2}}_{x,t}(Q_{R})}\int _{0}^{2R^2}\int _{\R ^n}\frac{|z|^{2s+\varepsilon}+a^{s+\frac{\varepsilon}{2}}}{a^{\frac{n}{2}+1+s}}e^{-\frac{\abs{z}^2}{4a}}dzda\\
        &\le C(R)[u-u_i]_{C^{2s+\varepsilon,s+\frac{\varepsilon}{2}}_{x,t}(Q_{R})}.
    \end{align*}

    For $2s+\varepsilon>1$, we denote $v_i = u-u_i$. Then
    \begin{align*}
        I_i(x,t,R) & = \int _{-R^2}^t\int _{B_R}\frac{v_i(x,t)-v_i(x,\tau)}{(t-\tau)^{\frac{n}{2}+1+s}}e^{-\frac{|x-y|^2}{4(t-\tau)}}dyd\tau\\
        &+\int _{-R^2}^t\int _{B_R\backslash B_1(x)}\frac{v_i(x,\tau)-v_i(y,\tau)}{(t-\tau)^{\frac{n}{2}+1+s}}e^{-\frac{|x-y|^2}{4(t-\tau)}}dyd\tau\\
        &+ \int _{-R^2}^t\int _{ B_1(x)}\frac{v_i(x,\tau)-v_i(y,\tau)}{(t-\tau)^{\frac{n}{2}+1+s}}e^{-\frac{|x-y|^2}{4(t-\tau)}}dyd\tau\\
        :&= I^1_i+I_i^2+I_i^3.
    \end{align*}
    For $I_i^1$, a direct calculation shows
    \begin{align*}
        |I_i^1|&\le \int _{-R^2}^t\frac{|v_i(x,t)-v_i(x,\tau)|}{(t-\tau)^{1+s}}\left(\int _{\R ^n}\frac{e^{-\frac{|x-y|^2}{4(t-\tau)}}}{(t-\tau)^{\frac{n}{2}}}dy\right)d\tau\\
        &\le C\int _{-R^2}^t\frac{|v_i(x,t)-v_i(x,\tau)|}{(t-\tau)^{1+s}}d\tau\\
        &\le C\int _{-R^2}^t\frac{\norm {v_i}_{C^{2s+\varepsilon,s+\frac{\varepsilon}{2}}_{x,t}(Q_{R})}(t-\tau)^{s+\frac{\varepsilon}{2}}}{(t-\tau)^{1+s}}\\
        &\le C(R)\norm {v_i}_{C^{2s+\varepsilon,s+\frac{\varepsilon}{2}}_{x,t}(Q_{R})}.
    \end{align*}
    For $I_i^2$, we have 
    \[
    |v_i(x,\tau)-v_i(y,\tau)|\le \|v_i\|_{C^{2s+\varepsilon,s+\frac{\varepsilon}{2}}_{x,t}(Q_{R})}|x-y|\le \|v_i\|_{C^{2s+\varepsilon,s+\frac{\varepsilon}{2}}_{x,t}(Q_{R})}|x-y|^{2s+\varepsilon}.
    \]
    Similar argument to the case $2s+\varepsilon<1$ implies
    \[
    |I_i^2|\le C(R)\|v_i\|_{C^{2s+\varepsilon,s+\frac{\varepsilon}{2}}_{x,t}(Q_{R})}.
    \]
    For $I_i^3$, we rewrite
    \begin{align*}
        \int _{-R^2}^t&\int _{B_1(x)}\frac{v_i(x,\tau)-v_i(y,\tau)}{(t-\tau)^{\frac{n}{2}+1+s}}e^{-\frac{|x-y|^2}{4(t-\tau)}}dyd\tau\\&= \frac{1}{2}\int _{-R^2}^t\int _{B_1}\frac{(v_i(x,\tau)-v_i(x-y,\tau))-(v_i(x+y,\tau)-v_i(x,\tau))}{(t-\tau)^{\frac{n}{2}+1+s}}e^{-\frac{|y|^2}{4(t-\tau)}}dyd\tau.
    \end{align*}
    For any $(y,\tau)\in B_{1}\times (-R^2,t)$, there exist $\xi _1$ and $\xi _2$ lying on the line segment connecting $x$ to $x-y$ and the line segment connecting $x$ to $x+y$, respectively, such that
    \[
    v_i(x,\tau)-v_i(x-y,\tau) = \nabla_x(\xi_1,\tau)\cdot y,\quad v_i(x+y,\tau)-v_i(x,\tau) = \nabla_x(\xi_2,\tau)\cdot y.
    \]
    Hence,
    \begin{align*}
        |I_3^i| &\le\int _{-R^2}^t\int _{B_{1}}\frac{|(\nabla_x(\xi_1,\tau)-\nabla_x(\xi_2,\tau))\cdot y|}{(t-\tau)^{\frac{n}{2}+1+s}}e^{-\frac{|y|^2}{4(t-\tau)}}dyd\tau
        \\&\le \int _{-R^2}^t\int _{B_{1}}\frac{\norm{v_i}_{C^{2s+\varepsilon,s+\frac{\varepsilon}{2}}_{x,t}(Q_{R})}|\xi _1-\xi _2|^{2s+\varepsilon-1}|y|}{(t-\tau)^{\frac{n}{2}+1+s}}e^{-\frac{|y|^2}{4(t-\tau)}}dyd\tau\\
        &\le C\norm{v_i}_{C^{2s+\varepsilon,s+\frac{\varepsilon}{2}}_{x,t}(Q_{R})}\int _{-R^2}^t\int _{B_{1}}\frac{|y|^{2s+\varepsilon}}{(t-\tau)^{\frac{n}{2}+1+s}}e^{-\frac{|y|^2}{4(t-\tau)}}dyd\tau\\
        & \le C(R)\norm{v_i}_{C^{2s+\varepsilon,s+\frac{\varepsilon}{2}}_{x,t}(Q_{R})}.
    \end{align*}
     
    To sum up, we have proved that for any $s\in (0,1)$,
    \[
    |I_i(x,t,R)|\le C(R)\norm{u-u_i}_{C^{2s+\varepsilon,s+\frac{\varepsilon}{2}}_{x,t}(Q_{R})}.
    \]
    Therefore, from the assumption that $u_i\to u$ in $C^{2s+\varepsilon,s+\frac{\varepsilon}{2}} _{x,t}(Q_{R})$, we obtain
    \begin{equation}\label{2}
         I_i(x,t,R)\to 0,\,\,\text{as}\,\,i\to\infty.
    \end{equation}

   Turning to the analysis of $E_i$,  we rewrite
    \begin{align*}
        E_i(x,t,R) & = \int _{\left(\R^n\times(-\infty,t)\right)\setminus Q_R} \left((u-u_i)(x,t)-u(y,\tau) \right)M(x-y,t-\tau)dyd\tau \\
        &= (u-u_i)(x,t)\left(\int _{-\infty}^{-R^2}\int _{\R^n}M(x-y,t-\tau)dyd\tau+\int _{B_R^c}\int_{-R^2}^tM(x-y,t-\tau)dyd\tau\right)\\&-\int  _{\left(\R^n\times(-\infty,t)\right)\setminus Q_R}u(y,\tau)M(x-y,t-\tau)dyd\tau.
    \end{align*}
   First, it can be checked that
    \[\int _{-\infty}^{-R^2}\int _{\R^n}M(x-y,t-\tau)dyd\tau+\int _{B_R^c}\int_{-R^2}^tM(x-y,t-\tau)dyd\tau\leq C/R^{2s},\]
then by $u_i(x,t)\to u(x,t)$,
    we derive
    \begin{equation}\label{3}
       E(x,t,R):= \lim _{i\to\infty}E_i(x,t,R) = -\int  _{\left(\R^n\times(-\infty,t)\right)\setminus Q_R}u(y,\tau)M(x-y,t-\tau)dyd\tau.
    \end{equation}
    Next, we claim that
     \begin{equation}\label{uni E}
         \lim _{R\to\infty}\sup _{(x,t)\in Q_{R/2}}\abs{E(x,t,R)}=0.
     \end{equation}
   To see this, we decompose the error term as
     \begin{align*}
         |E(x,t,R)| =& \left(\int _{-\infty}^{-R^2}\int _{\R ^n} +\int _{-R^2}^t\int _{|y|>R}\right)|u(y ,\tau)|M(x-y,t-\tau)dyd\tau\\
         :=&J_1+J_2.
     \end{align*}
Since the kernel admits a pointwise decay estimate (See also \cite{ST}): there exists a constant $\Lambda = \Lambda(n,s)>0$, such that
    \begin{equation*}
        M(x,t)\le \frac{\Lambda}{|x|^{n+2+2s}+t^{\frac{n}{2}+1+s}}.
    \end{equation*}
  Thus,  for any $(x,t)\in Q_{\frac{R}{2}}$,  we obtain
     \begin{align}
         |J_1| &\le \int _{-\infty}^{-R^2}\int _{\R ^n}\frac{\Lambda|u(y,\tau)|}{|t-\tau|^{\frac{n}{2}+1+s}+|x-y|^{n+2+2s}}dyd\tau\nonumber\\
         & \le C\int _{-\infty}^{-R^2}\int _{\R ^n}\frac{\abs{u(y,\tau)}}{\abs{\tau}^{\frac{n}{2}+1+s}+|x-y|^{n+2+2s}}dyd\tau.\label{I1}
     \end{align}
     In the last inequality, we have used the fact that $|t-\tau|\ge |\tau|/2$, since $|\tau|\ge R^2\ge 2|t|$.

    For spatial estimates, consider two cases:
      \begin{itemize}
        \item For $|y|> 2|x|$, since $x\in B_{R/2}$, we derive $|x-y|\ge |y|/2$ and thus
        \[
        1+|\tau|^{\frac{n}{2}+1+s}+|y|^{n+2+2s}\lesssim |\tau|^{\frac{n}{2}+1+s}+|x-y|^{n+2+2s}.
        \]
        \item For $|y|\le 2|x|$, the condition $|\tau|\ge R^2\ge (2|x|)^2$ implies
        \[
        1+|\tau|^{\frac{n}{2}+1+s}+|y|^{n+2+2s}\lesssim |\tau|^{\frac{n}{2}+1+s}.
        \]
    \end{itemize}
Combining the above two inequalities with \eqref{I1}, we deduce
     \[
     \sup _{(x,t)\in Q_{R/2}}|I_1(x,t)|\le C\int _{-\infty}^{-R^2}\int _{\R ^n}\frac{|u(y,\tau)|}{1+|\tau|^{\frac{n}{2}+1+s}+|y|^{n+2+2s}}dyd\tau.
     \]
     Since $u\in\mathcal{L}^{2s,s} (\R ^n\times\R)$, by the dominated convergence theorem, we arrive at
     \begin{equation}\label{J1}
         \lim _{R\to\infty}\sup _{(x,t)\in Q_{R/2}}|J_1(x,t)| =0.
     \end{equation}

     Similarly, \begin{align}\label{J2}
         |J_2| \le C \int _{-R^2}^{R^2}\int _{|y|>R}\frac{|u(y,\tau)|}{|t-\tau|^{\frac{n}{2}+1+s}+|y|^{n+2+2s}}dyd\tau .
    \end{align}
     For temporal parameters $\tau\in (-R^2,R^2)$, we distinguish two scenarios:
    \begin{itemize}
        \item When $|\tau|>2|t|$, we obtain $|t-\tau|\ge |\tau|/2$ and then
        \[
        1+|\tau|^{\frac{n}{2}+1+s}+|y|^{n+2+2s}\lesssim |t-\tau|^{\frac{n}{2}+1+s}+|y|^{n+2+2s}.
        \]
        \item For $|\tau|\le 2\abs{t}$, since $|y|\geq R,$ we have
        \[
        1+|\tau|^{\frac{n}{2}+1+s}+|y|^{n+2+2s}\lesssim |y|^{n+2+2s}.
        \]
    \end{itemize}
   Consequently,  \eqref{J2} further transforms into
    \[
    |J_2|\lesssim \int _{-\infty}^{\infty}\int _{|y|>R}\frac{|u(y,\tau)|}{1+|\tau|^{\frac{n}{2}+1+s}+|y|^{n+2+2s}}dyd\tau .
    \]
    The dominated convergence theorem again demonstrates
    \begin{equation}\label{{I22}}
         \lim _{R\to\infty}\sup _{(x,t)\in Q_{R/2}}|J_2(x,t)| =0.
    \end{equation}
   Combing \eqref{J1} and  \eqref{{I22}}, we obtain  \eqref{uni E}.

     Now, since $\{\ma{s}u_i(x,t)\}$ converges, we infer from \eqref{1}, \eqref{2} and \eqref{3} that
     \[F(x,t,R):=\lim _{i\to\infty}F_i(x,t,R)\text{ exists and is finite}. \]
     According to the definition of $F_i$ and $u_i\ge 0$, we have $$F(x,t,R)\ge 0.$$
     Moreover,
     \begin{equation}
        F(x,t,R) = \ma{s}u(x,t)-\lim _{i\to\infty}\ma{s}u_i(x,t)-E(x,t,R).
     \label{RLT}
     \end{equation}

According to the assumption \eqref{bdd} and \eqref{uni E},  there exists a universal constant $N$ independent of $x$ and $t$, such that,
     \begin{equation}\label{Fibdd}
         0\le \sup _{(x,t)\in Q_{R/2}}F(x,t,R)\le M+1 \text{ whenever }R>N.
     \end{equation}
     Observing that $F_i(x,t,R)$ is nonnegative and non-increasing in $R$, so is $F(x,t,R)$. Hence, the limit
     \begin{equation}\label{def-b}
         b(x,t):=\lim _{R\to\infty}F(x,t,R).
     \end{equation}
  exists and is nonnegative and  finite.

     Therefore, combining \eqref{uni E}, \eqref{RLT}, and \eqref{def-b},  it suffices to show that
     \begin{equation}\label{bx=b}
         b(x,t)\equiv b,
     \end{equation}
with a nonnegative constant $b.$

    To this end, we will proceed the proof in two steps.

    Step 1: We show that $b(x,t)$ is independent of $x$, i.e.,
    \begin{equation}
        b(x,t) = b(0,t),\quad\forall \,(x,t)\in Q_R.
    \end{equation}

The core idea is to estimate, for each fixed $t$,  the ratio of the kernel $ M(x - y, t - \tau) $ at pairs of well chosen spatial points, with $ (y, \tau) $ varying in different parts of the exterior region $ \left( R^n \times (-\infty, t) \right) \setminus Q_R $. To achieve this, we  divide the region into two parts: $\{(y,\tau):\abs{y}\ge R\}$ and
$\{(y,\tau):\abs{y}\le R, \abs{\tau}\ge R^2\}$.

(1) $\abs{y}\ge R$. Take $\delta = {R}^{-\frac{1}{3}}$. Note that
\[
M(x-y,t-\tau) = \frac{c_{n,s}}{(t-\tau)^{\frac{n}{2}+1+s}}e^{-\frac{\abs{x-y}^2}{4(t-\tau)}},\quad\forall\,\tau<t.
\]

Case 1: For $\abs{x-y}\ge \delta\abs{t-\tau} $, we show that
\begin{equation}\label{c1}
    \frac{M(x-y,t-\tau)}{M(x\pm \frac{e_j}{\delta ^2}-y,t-\tau)}\lesssim \exp\{-\frac{c}{\delta
    }\}\to 0\,\,\text{as }R\to 0, \quad \forall\, y\in I_{\pm}^j, \tau<t.
\end{equation}
For the notation $\{I_+^j,I_-^j\}_{j=1}^n$, we mean that,
\[
I_+^j:=\{y\in\R^n:y_j-x_j\ge 0,\,\,\text{and } y_j-x_j = \max _{1\le k\le n}|y_k-x_k| \}.
\]
\[
I_-^j:=\{y\in\R^n:y_j-x_j\le 0,\,\,\text{and } x_j-y_j = \max _{1\le k\le n}|y_k-x_k| \}.
\]
Here we divide the spatial coordinate system centered at $x$ into $2n$ equal parts, as shown in  Figure \ref{fig:partition1} for $n=2.$

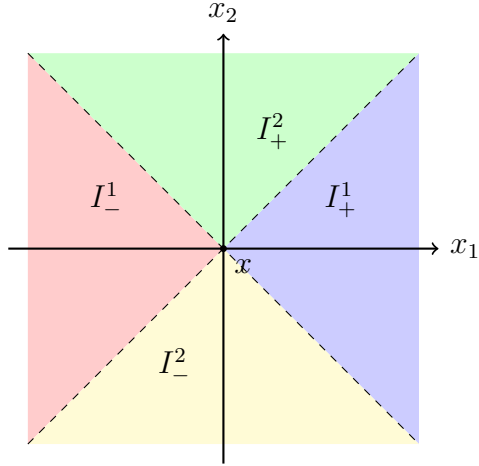
\begin{figure}[htbp]
    \centering
    \begin{tikzpicture}[scale=1.3]

        \fill[blue!20] (0,0) -- (2,2) -- (2,-2) -- cycle; 
        \fill[red!20] (0,0) -- (-2,2) -- (-2,-2) -- cycle; 
        \fill[green!20] (0,0) -- (2,2) -- (-2,2) -- cycle; 
        \fill[yellow!20] (0,0) -- (-2,-2) -- (2,-2) -- cycle; 

        \node at (1.2, 0.5) {$I_+^1$};
        \node at (-1.2, 0.5) {$I_-^1$};
        \node at (0.5, 1.2) {$I_+^2$};
        \node at (-0.5, -1.2) {$I_-^2$};

        \draw[dashed] (-2, -2) -- (2, 2);
        \draw[dashed] (-2, 2) -- (2, -2);

        \draw[->, thick] (-2.2, 0) -- (2.2, 0) node[right] {$x_1$}; 
        \draw[->, thick] (0, -2.2) -- (0, 2.2) node[above] {$x_2$}; 

        \fill[black] (0,0) circle (1pt) node[below right] {$x$};
    \end{tikzpicture}
    \caption{Partition of \(\mathbb{R}^2\) into regions \(I_+^j\) and \(I_-^j\).}
    \label{fig:partition1}
\end{figure}

Without loss of generality, we may assume $y\in I_+^j$, then
\begin{align*}
    \frac{M(x-y,t-\tau)}{M(x +\frac{e_j}{\delta ^2}-y,t-\tau)} & = \exp\left\{-\frac{\abs{x-y}^2}{4(t-\tau)}+\frac{\left|\frac{e_j}{\delta ^2} +x-y\right|^2}{4(t-\tau)}\right\}\\
    & = \exp\left\{-\frac{2(y_j-x_j)-\frac{1}{\delta ^2}}{4(t-\tau)\delta ^2}\right\}\\&\lesssim \exp\left\{-\frac{c_1\abs{y-x}-\frac{1}{\delta ^2}}{4(t-\tau)\delta ^2}\right\}\\&\le \exp\left\{-\frac{c_2\abs{y-x}}{4(t-\tau)\delta ^2}\right\}\\
    &\leq e^{-\frac{c}{\delta
    }}\to 0\,\, \text{as $\delta\to 0$ (or $R\to\infty$)}.
\end{align*}
Here we have  used the fact that
\[
\abs{y-x}\ge \abs{y}-\abs{x}\ge R-\frac{R}{3}= \frac{2}{3\delta ^3}\ge \frac{c}{\delta ^2}.
\]
This validate  \eqref{c1}.

Case 2: For $\abs{y-x}\leq \delta \abs{t-\tau}$, we show that
\begin{equation}\label{c2}
    \frac{M(x-y,t-\tau)}{M(0-y,t-\tau)}\to 1\,\,\text{as $R\to\infty$ uniformly in $y$ and for $\tau<t$}.
\end{equation}
Indeed,
\begin{align*}
    \frac{M(x-y,t-\tau)}{M(0-y,t-\tau)} &=\exp\left\{ -\frac{\abs{x-y}^2}{4(t-\tau)} +\frac{\abs{y}^2}{4(t-\tau)} \right\}\\
    & = \exp\left\{\frac{2x\cdot y - \abs{x}^2}{4(t-\tau)}\right\}\\&\le \exp\left\{\frac{2\abs{x}\abs{y} + \abs{x}^2}{4(t-\tau)}\right\}\\
    &\leq \exp\left\{\frac{2\delta\abs{x}(t-\tau)+3\abs{x}^2}{4(t-\tau)}\right\}\\
    & = \exp\left\{\frac{\delta \abs{x}}{2}\right\}\cdot \exp\left\{\frac{3\abs{x}^2}{4(t-\tau)}\right\}\\&\le \exp\left\{\delta (\frac{\abs{x}}{2}+\frac{3|x|^2}{4})\right\}\to 1,\,\,\text{as $\delta\to 0$ (or $R\to\infty$)}.
\end{align*}
Here, we have used the fact that
\[
\delta(t-\tau)\ge \abs{y-x}\ge \abs{y}-\abs{x}\ge R-\frac{R}{3}\ge 1.
\]
Similarly,
\[
\frac{M(0-y,t-\tau)}{M(x-y,t-\tau)}\le \exp\left\{\delta \left(\frac{\abs{x}}{2}+\frac{3|x|^2}{4} \right)\right\}\to 1\,\,\text{as $R\to \infty$}.
\]

(2) For $\abs{y}\le R,\abs{\tau}\ge R^2$, we show that
\begin{equation}\label{c3}
    \frac{M(x-y,t-\tau)}{M(0-y,t-\tau)}\to 1\,\,\text{as $R\to\infty$ uniformly in $y$ and for $\tau<t$}.
\end{equation}
Indeed, similar to Case 2 in (1), we have
\begin{align*}
    \frac{M(x-y,t-\tau)}{M(0-y,t-\tau)}&\le \exp\left\{\frac{2\abs{x}\abs{y} + \abs{x}^2}{4(t-\tau)}\right\}\\&\le \exp\left\{\frac{\abs{x}^2+2R\abs{x}}{4(R^2+t)}\right\}\\&\le \exp\left\{\frac{c\abs{x}}{R}\right\}\to 1\,\,\text{as $R\to\infty$}.
\end{align*}
Similarly,
\[
\frac{M(x-y,t-\tau)}{M(0-y,t-\tau)}\ge \exp\left\{-\frac{c\abs{x}}{R} \right\}\to 1\,\,\text{as $R\to \infty$}.
\]
Hence, we verified \eqref{c2} and \eqref{c3}.

Now divide the outer domain into three parts (as presented in Figure \ref{fig:partition x}):
\begin{align*}
    &A_R:=\{(y,\tau)\in B_R^c\times (-\infty,t):\abs{y-x}\ge \delta (t-\tau)\},\\
    &B_R:=\{(y,\tau)\in B_R^c\times (-\infty,t):\abs{y-x}\le \delta (t-\tau)\},\\
    &C_R:=B_R\times (-\infty,-R^2).
\end{align*}
Then
\begin{align}
    F_i(x,t,R) =& \int _{\left(\R^n\times(-\infty,t)\right)\setminus Q_R} u_i(y,\tau)M(x-y,t-\tau)dyd\tau\nonumber\\
    =& \int _{A_R} u_i(y,\tau)M(x-y,t-\tau)dyd\tau +\int _{B_R} u_i(y,\tau)M(x-y,t-\tau)dyd\tau\nonumber\\+&\int _{C_R} u_i(y,\tau)M(x-y,t-\tau)dyd\tau\nonumber\\
    :=& F_i^1(x,t,R)+F_i^2(x,t,R)+F_i^3(x,t,R).\label{F}
\end{align}

For $F_i^1(x,t,R)$, it follows from  \eqref{c1} that
\begin{align}
    F_i^1(x,t,R) & =\int _{A_R} u_i(y,\tau)M(x-y,t-\tau)dyd\tau\nonumber\\
    & = \sum _{j=1}^n\int _{A_R\cap (I_+^j\cup I_-^j)}u_i(y,\tau)\frac{M(x-y,t-\tau)}{M(x\pm \frac{e_j}{\delta ^2}-y,t-\tau)}M\left(x\pm \frac{e_j}{\delta ^2}-y,t-\tau\right)dyd\tau\nonumber\\
    & \le \exp\{-\frac{c}{\delta}\}\sum _{j=1}^n\int _{\left(\R^n\times(-\infty,t)\right)\setminus Q_R}u_i(y,\tau)M\left(x\pm \frac{e_j}{\delta ^2}-y,t-\tau\right)dyd\tau\nonumber\\
    &= \exp\{-\frac{c}{\delta}\}\sum _{j=1}^n F_i\left(x\pm \frac{e_j}{\delta ^2},t,R\right),\label{F11}
\end{align}
taking $i\to\infty$ in \eqref{F11}, we obtain
\begin{align}\label{F1}
     \limsup _{i\to\infty}F_i^1(x,t,R)&\le \exp\{-\frac{c}{\delta}\}\sum _{j=1}^nF\left(x\pm \frac{e_j}{\delta ^2},t,R\right).
\end{align}
Since $|x|\leq \frac{R}{3}$ and $\delta = {R}^{-\frac{1}{3}}$, we have $$|x\pm \frac{e_j}{\delta ^2}|\leq \frac{R}{3}+{R}^{\frac{2}{3}}\leq \frac{R}{2},$$    for sufficiently large $R$,  then by \eqref{Fibdd}, we deduce
\[
F\left(x\pm \frac{e_j}{\delta ^2},t,R\right)\le M+1.
\]
Together with \eqref{F11} and \eqref{F1}, we arrive at
\begin{equation}\label{F-1}
   \lim _{R\to\infty}\limsup _{i\to\infty} F_i^1(x,t,R)=0.
\end{equation}
Similarly,
\begin{equation}\label{F0}
   \lim _{R\to\infty}\limsup _{i\to\infty} F_i^1(0,t,R)=0.
\end{equation}

Now we estimate
\begin{align*}
    F_i^2 (0,t,R) &= \int _{B_R} u_i(y,\tau)M(x-y,t-\tau)dyd\tau\\
    &=\int _{B_R} u_i(y,\tau)M(0-y,t-\tau)\frac{M(x-y,t-\tau)}{M(0-y,t-\tau)} dyd\tau.
\end{align*}
 By \eqref{c2}, we derive
\[\exp\{-\delta(c_1\abs{x}^2+c_2\abs{x})\}F_i^2(0,t,R)\le F_i^2(x,t,R)\le \exp\{\delta(c_1\abs{x}^2+c_2\abs{x})\}F_i^2(0,t,R).
\]
Similarly, by \eqref{c3}, we have
\[
\exp\{-\frac{c\abs{x}}{R}\}F_i^3(0,t,R)\le F_i^3(x,t,R)\le \exp\{\frac{c\abs{x}}{R}\}F_i^3(0,t,R).
\]
Consequently,
\[ F_i^2(x,t,R)+F_i^3(x,t,R)\le \exp\{\delta c(\abs{x}^2+\abs{x})\}(F_i^2(0,t,R)+F_i^3(0,t,R)),
\]
and
\[ F_i^2(x,t,R)+F_i^3(x,t,R)\ge \exp\{-\delta c(\abs{x}^2+\abs{x})\}(F_i^2(0,t,R)+F_i^3(0,t,R)).\]
It follows that
\begin{equation}\label{F-23}
    \exp\{-\delta c(\abs{x}^2+\abs{x})\}\le \frac{F_i^2(x,t,R)+F_i^3(x,t,R)}{F_i^2(0,t,R)+F_i^3(0,t,R)}\le \exp\{\delta c(\abs{x}^2+\abs{x})\}.
\end{equation}
For any fixed $(x,t)\in Q_{\frac{R}{3}}$, we have
$$\delta c(\abs{x}^2+\abs{x}) =o(1)\ \mbox{for}\,R\ \mbox{large}, $$
Then from \eqref{F-23}, we deduce
\begin{equation}\label{F23}
     \lim_{R\to\infty}\limsup _{i\to\infty}(F^2_i(x,t,R)+F^3_i(x,t,R)) = \lim_{R\to\infty}\limsup _{i\to\infty}(F^2_i(0,t,R)+F^3_i(0,t,R)).
\end{equation}
On one hand, noticing that $F^1_i\ge0$, we have
\begin{align*}
    b(x,t) &=\lim_{R\to\infty}\limsup _{i\to\infty}(F_i^1+F_i^2+F_i^3)(x,t)\\
    &\ge\lim_{R\to\infty}\limsup _{i\to\infty}(F_i^2+F_i^3)(x,t).
\end{align*}
On the other hand, by \eqref{F-1},
\begin{align*}
    b(x,t) &=\lim_{R\to\infty}\limsup _{i\to\infty}(F_i^1+F_i^2+F_i^3)(x,t)\\
    &\le\lim_{R\to\infty}\limsup _{i\to\infty}F_i^1(x,t)+\lim_{R\to\infty}\limsup _{i\to\infty}(F_i^2+F_i^3)(x,t)\\
    &= \lim_{R\to\infty}\limsup _{i\to\infty}(F_i^2+F_i^3)(x,t).
\end{align*}
Therefore 
\[
b(x,t)=\lim_{R\to\infty}\limsup _{i\to\infty}(F_i^2+F_i^3)(x,t).
\]

Combining \eqref{F}, \eqref{F-1}, \eqref{F0} and \eqref{F23}, we obtain
\begin{align*}
    b(x,t)=\lim_{R\to\infty}\limsup _{i\to\infty}F_i(x,t,R) & = \lim_{R\to\infty}\limsup _{i\to\infty} (F_i^2(x,t,R)+F_i^3(x,t,R))\\
    &=\lim_{R\to\infty}\limsup _{i\to\infty} (F_i^2(0,t,R)+F_i^3(0,t,R))\\
    &=\lim_{R\to\infty}\lim _{i\to\infty} F_i(0,t,R) =:b(0,t).
\end{align*}

Step 2: In the previous step, we have prove that $b(x,t)=b(0,t)$, now we show that $b(0,t)$ is also independent of $t$, i.e.,
\[
b(0,t)=b(0,0):=b\ge 0.
\]

Similar to the first step, the core strategy involves analyzing the ratio  of the kernel $M(0 - y, t - \tau)$ at pairs of  well chosen  temporal variables, while the integration variables $(y, \tau)$ range over the exterior region $\left( \mathbb{R}^n \times (-\infty, t) \right) \setminus Q_R$.

To achieve this, we  divide the exterior region into four parts: $C_R$, $D_R$, $E_R$, and $F_R$ (as shown in Figure \ref{fig:partition t}),
\begin{align*}
    &C_R:=B_R\times (-\infty,-R^2),
    \\
    &D_R:=\{(y,\tau)\in B_R^c\times (-\infty,t):\abs{t-\tau}^2\ge R\abs{y}^2 \},\\
    &E_R:=\{(y,\tau)\in B_R^c\times (-\infty,-R^{\frac{3}{2}}):\abs{t-\tau}^2\le R\abs{y}^2 \},\\
    &F_R := B_R^c\times (-R^{\frac{3}{2}},t).
\end{align*}
Correspondingly, we have
\begin{align}
    F_i(0,t,R)=&\left(\int _{C_R}+\int _{D_R}+\int _{E_R}+\int _{F_R} \right)u_i(y,\tau)M(0-y,t-\tau)dyd\tau\nonumber\\
    :=&F_i^4(0,t,R)+F_i^5(0,t,R)+F_i^6(0,t,R)+F_i^7(0,t,R).\label{sFt}
\end{align}
Next, we show that for any $|t|\leq \frac{R^2}{9}$, it holds
\begin{align}\label{Ft}\nonumber
   &\lim _{R\to\infty}\limsup _{i\to\infty}(F_i^4(0,t,R)+F_i^5(0,t,R)) =\lim _{R\to\infty}\limsup _{i\to\infty}(F_i^4(0,0,R)+F_i^5(0,0,R)),  \\
   &\limsup_{i\to\infty} F_i^6(0,t,R)\to 0, \limsup_{i\to\infty}F_i^7(0,t,R)\to 0\,\,\text{as }R\to\infty,
\end{align}

We firstly prove: for sufficientle large $R$,
\begin{equation}\label{F4}
    (1+o(1))e^{-\frac{c}{R^2}}F^4_i(0,0,R)\le F^4_i(0,t,R)\le (1+o(1))e^{\frac{c}{R^2}}F^4_i(0,0,R).
\end{equation}

For $(y,\tau)\in C_R$, we have $\abs{y}<R$, and $\abs{\tau}\ge R^2$, and then $\abs{\tau}^2\ge R^2\abs{y}^2$. Consequently,
\begin{align*}
    \frac{M(-y,0-\tau)}{M(-y,t-\tau)} & = \left(\frac{t-\tau}{-\tau} \right)^{\frac{n}{2}+1+s} \exp\left\{-\frac{\abs{y}^2}{4}\frac{t}{(-\tau)(t-\tau)}\right\}\\
    &\le \left(\frac{t-\tau}{-\tau} \right)^{\frac{n}{2}+1+s}\exp\left\{\frac{c\abs{y}^2}{\tau^2}\right\}\\&\le (1+o(1))\exp\{\frac{c}{R^2}\}.
\end{align*}
Therefore,
\begin{align*}
    F^4_i(0,t,R) &= \int _{C_R}u_i(y,\tau)M(-y,0-\tau)\frac{M(t-y,-\tau)}{M(0-y,0-\tau)}dyd\tau\\&\ge (1+o(1))\exp\{-\frac{c}{R^2}\}F^4_i(0,0,R)
\end{align*}
Similarly, we have
\[F^4_i(0,t,R)\le (1+o(1))\exp\{\frac{c}{R^2}\}F^4_i(0,0,R).\]
Thus, \eqref{F4} is valid.

For $(y,\tau)\in D_R$, we show that
\begin{equation}\label{F5}
    (1+o(1))e^{-\frac{c}{R}}F^5_i(0,0,R)\le F^5_i(0,t,R)\le (1+o(1))e^{\frac{c}{R}}F^5_i(0,0,R).
\end{equation}
In this case, we have $\abs{\tau}^2\ge R^{3}$.
Similar to the situation in $C_R$, we deduce
\begin{align*}
    \frac{M(-y,0-\tau)}{M(-y,t-\tau)} & = \left(\frac{t-\tau}{-\tau} \right)^{\frac{n}{2}+1+s} \exp\left\{-\frac{\abs{y}^2}{4}\frac{t}{(-\tau)(t-\tau)}\right\}\\
    &\le \left(\frac{t-\tau}{-\tau} \right)^{\frac{n}{2}+1+s}\exp\left\{\frac{c\abs{y}^2}{\tau^2}\right\}\\&\le (1+o(1))\exp\{\frac{c}{R}\}.
\end{align*}
Then the same argument implies \eqref{F5}.

Combining \eqref{F4} and \eqref{F5}, we derive the first claim in \eqref{Ft}:
\begin{equation*}
    \lim _{R\to\infty}\limsup _{i\to\infty}(F_i^4(0,t,R)+F_i^5(0,t,R)) =\lim _{R\to\infty}\limsup _{i\to\infty}(F_i^4(0,0,R)+F_i^5(0,0,R)).
\end{equation*}

From now on, without loss of generality, we assume $t=0$.

For $(y,\tau)\in E_R$, consider $t_0>0$ to be chosen, then
\[
\frac{M(-y,0-\tau)}{M(-y,t_0-\tau)}=\left(\frac{t_0-\tau}{-\tau} \right)^{\frac{n}{2}+1+s}\exp\left\{-\frac{\abs{y}^2}{4}\frac{t_0}{(-\tau)(t_0-\tau)}\right\}
\]
In this case, $\abs{\tau}^2\leq R\abs{y}^2$, $\abs{\tau}\ge R^{\frac{3}{2}}$. We choose $t_0 = R^{\frac{3}{2}}$. Then we have $(-\tau)(t_0-\tau)\lesssim\abs{\tau}^2$, and
\begin{align*}
    \frac{M(-y,0-\tau)}{M(-y,t_0-\tau)}&\lesssim \exp\left\{-\frac{\abs{y}^2R^{\frac{3}{2}}}{\abs{\tau}^2}\right\}\\&\lesssim \exp\{-R^{\frac{1}{2}}\}\to 0 \,\,\text{as }R \to \infty.
\end{align*}

For $(y,\tau)\in F_R$, $\abs{\tau}\le R^{\frac{3}{2}}$, then $\abs{\tau}\le R\abs{y}^2$. Recall $t_0 = R^{\frac{3}{2}}$, we have
\[
\frac{M(-y,0-\tau)}{M(-y,t_0-\tau)}=\left(\frac{t_0-\tau}{-\tau} \right)^{\frac{n}{2}+1+s}\exp\left\{-\frac{\abs{y}^2}{4}\frac{t_0}{(-\tau)(t_0-\tau)}\right\}.
\]
Noticing that $(-\tau)(t_0-\tau)\lesssim |\tau|t_0$ and $\frac{t_0-\tau}{-\tau}\le 2\frac{t_0}{\abs{\tau}}$, we have
\begin{align*}
    \frac{M(-y,0-\tau)}{M(-y,t_0-\tau)} &\lesssim \frac{t_0}{\abs{\tau}}\exp\left\{-\frac{\abs{y}^2}{4\abs{\tau}}\right\} \\ &= \frac{t_0}{\abs{y}^2}\left(\frac{\abs{y}^2}{\abs{\tau}}\exp\left\{-\frac{\abs{y}^2}{4\abs{\tau}}\right\} \right)\\
    &\le \frac{R^{\frac{3}{2}}}{R^2}=R^{-\frac{1}{2}}\to 0\,\,\text{as }R\to \infty.
\end{align*}

In conclusion, we have proved that, for  any $(y,\tau)\in E_R\cup F_R$ and $|t|\leq \frac{R^2}{9},$
\[
\frac{M(-y,t-\tau)}{M(-y,t+t_0-\tau)}\to 0\,\,\text{as }R\to \infty,
\]
where $t_0=R^{\frac{3}{2}}\ll R^2$.
Hence,
\begin{align*}
F_i^6(0,t,R) &=\int _{E_R}u_i(y,\tau)M(0-y,t+t_0-\tau)\frac{M(-y,t-\tau)}{M(-y,t+t_0-\tau)}dyd\tau \\
& \leq o(1)\int _{\left(\R^n\times(-\infty,t)\right)\setminus Q_R}u_i(y,\tau)M\left(0-y,t+t_0-\tau\right)dyd\tau\\
&\leq  o(1)F_i(0,t+ t_0,R).
\end{align*}
For sufficiently large \( R \), since
\[
|t + t_0| \leq \frac{1}{9}R^2 + R^{\frac{3}{2}} \leq \frac{1}{4}R^2,
\]
it follows from \eqref{Fibdd} that
\[
F(0, t + t_0, R) \leq M+1,
\]
and thus we obtain
\[\limsup\limits_{i\to\infty}F_i^6(0,t,R)\to 0,\,\,\text{as }R\to \infty.\]
Similarly,
\[
\limsup\limits_{i\to\infty}F_i^7(0,t,R) \to 0,\,\,\text{as }R\to \infty.
\]
This verifies the second claim in \eqref{Ft}.

Since $F_i^6$ and $F_i^7$ are nonnegative, a similar argument in step 1 implies
\[
b(0,t) = \lim _{R\to\infty}\limsup _{i\to\infty} (F_i^4+F_i^5)(0,t,R).
\]
Consequently,  from \eqref{sFt} and \eqref{Ft}, we deduce
\begin{align*}
    b(0,t):=\lim _{R\to\infty}\limsup _{i\to\infty}F_i(0,t,R) =& \lim _{R\to\infty}\limsup _{i\to\infty} (F_i^4(0,t,R)+F_i^5(0,t,R))\\
     =&\lim _{R\to\infty}\limsup _{i\to\infty} (F_i^4(0,0,R)+ F_i^5(0,0,R))\\
     =& \lim _{R\to\infty}\lim _{i\to\infty} F_i(0,0,R)\\
    :=& b(0,0)\ge 0.
\end{align*}
In a conclusion, for any $(x,t)\in \R^n\times\R$, we arrive at $$b(x,t)=b(0,0).$$
Therefore, we verify \eqref{bx=b} with  $$b:=b(0,0)=c_{n,s}\lim _{R\to\infty}\lim_{i\to\infty}\int _{\left(\R^n\times(-\infty,0)\right)\setminus Q_R}\dfrac{u_i(y,\tau)}{|\tau|^{\frac{n}{2}+1+s}}e^{\frac{\abs{y}^2}{4|\tau|}}dyd\tau,
$$and complete the proof of Theorem \ref{main thm}.
\end{proof}
\section{Counterexamples}
In this section, we will present some examples to demonstrate that the constant $b$ in Theorem \ref{main thm} can indeed be positive. To begin with, we provide two special cases where the sequence of smooth functions  depends solely on the spatial variable \( x \) or solely on the temporal variable \( t \). Inspired by this, we construct another sequence of nonnegative  functions  that depends on both  variables \( x \) and  \( t \) to further consolidate Theorem  \ref{main thm2}.

 \begin{proof}[Proof of Theorem \ref{main thm2}]
Let's start with two simple examples.

\medskip

\leftline{\bf{Example 1. A sequence of functions depending only on the spatial variables}}
\medskip

Denote
\begin{equation}\label{ex1}\phi_j(x)=j^\alpha\phi(j^{-\beta}|x|), \alpha,\beta>0,\end{equation}
where $\phi(t)\in C_0^\infty([2,3],[0,1])$ is a nonnegative smooth function with compact support in $\R$.
Clearly,  $$\phi_j\to 0\ \mbox{in}\ C^2_{loc}(\R^n).$$
In addition, for any $x\in\R^n,$ let $j>>1$ such that $|x|\leq j^\beta$, then $\phi_j(x)=0,$ thus
\begin{align*}
     (-\Delta)^s\phi_j(x)=&\int_{\R^n}\frac{-\phi_j(y)}{|x-y|^{n+2s}}dy\\
    =&-j^{\alpha+\beta n}\int_{\R^n}\frac{\phi(|y|)}{|x-j^\beta y|^{n+2s}}dy\\
    =&-j^{\alpha-2\beta s}\int_{B_3\setminus B_2}\frac{\phi(|y|)}{|\frac{x}{j^\beta}- y|^{n+2s}}dy,
\end{align*}
which implies that
\begin{eqnarray}\label{ex1-C0}
    \lim\limits_{j\to\infty}(-\Delta)^s\phi_j(x)=\left\{\begin{array}{ll} 0,&\alpha<2\beta s,\\
    -\omega_{n-1}\int_{2}^3\frac{\phi(t)}{ t^{1+2s}}dt:=-C_0, &\alpha=2\beta s,
    \end{array}\right.
\end{eqnarray}
Therefore, in case of $\alpha=2\beta s,$
\begin{equation}\label{phij}
    \ma{s}\phi_j(x)=(-\Delta)^s\phi_j(x)\to-C_0\ \ \mb{for\ some}\ C_0>0,\ a.e.\ x\in\R^n.
\end{equation}

\medskip

\leftline{\bf{Example 2. A sequence of functions depending only on the time variable}}
\medskip

Similarly, another sequence of nonnegative smooth functions can be constructed as \begin{equation}\label{ex-t}\psi_j(t)=j^\alpha\psi(j^{-\beta}t), \alpha=\beta s>0,\end{equation}
with $\psi(t)\in C_0^\infty([-3,-2],[0,1])$, which satisfies
\[\psi_j\to 0\ \mbox{in}\ C^1_{loc}(\R),\]but
\begin{equation*}
    \ma{s}\psi_j(t)=\partial_t^s\psi_j(t)\to-C_1\ \ \mb{for\ some}\ C_1>0,\ a.e. t\in\R.
\end{equation*}
Here, the Marchaud fractional derivative $\partial_t^s$ is defined as in \eqref{1.00}.

\medskip

Obviously, the two examples above represent special cases: in \eqref{ex1}, \( \phi_j(x) \) is a sequence of smooth functions depending solely on the spatial variable \( x \), while in \eqref{ex-t}, \( \psi_j(t) \) depends only on the temporal variable \( t \). To further demonstrate that the constant \( b \) in Theorem \ref{main thm} can indeed be positive, we now construct a general sequence of nonnegative functions that depend on both spatial and temporal variables \( x \) and \( t \).

\medskip

\leftline{\bf{Example 3. A sequence with space-time coupled functions}}
\medskip

   Define  $\phi_j(x)$ as in \eqref{ex1} with $\alpha =2s, \beta=1 $, and let $C_0$ be the constant given  in \eqref{ex1-C0}. Denote \begin{equation}\label{def-eta}\eta_j(t):=\eta(j^{-\gamma}t)\ \mb{with}\  \eta(t):=(t_+)^2+1,  \gamma>s=\frac{\alpha}{2},\end{equation} and  \[w_j(x,t):=\frac{1}{C_0}\phi_j(x)\eta_j(t).\]

Clearly,  $w_j\in \mathcal{L}^{2s,s} (\R ^n\times\R)\cap C^{2,1} _{x,t}(Q_{j}) $ and $$w_j(x,t)\to 0,\ \mb{in}\ C^{2,1}_{x,t,loc}(\R^n\times\R).$$
  Next, we show that  \begin{equation}\label{ex2}\lim\limits_{j\to\infty}\ma{s}w_j(x,t)=-1,  a.e.\ (x,t)\in\R^n\times\R.\end{equation}

For each fixed $(x,t)\in\R^n\times\R,$ $\forall i>>1.$ It follows from the monotonicity of $\eta$ that \begin{align}\nonumber
    \ma{s}w_j(x,t)&=P.V.\int_{-\infty}^t\int_{\R^n} (w_j(x,t)-w_j(y,\tau))M(x-y,t-\tau)dyd\tau\\ \nonumber
    &=P.V.\frac{1}{C_0}\eta_j(t)\int_{-\infty}^t\int_{\R^n} (\phi_j(x)-\phi_j(y))M(x-y,t-\tau)dyd\tau\\ \nonumber
       &\ \ \ \ \ \ +P.V.\frac{1}{C_0}\int_{-\infty}^t\int_{\R^n} (\eta_j(t)-\eta_j(\tau))\phi_j(y)M(x-y,t-\tau)dyd\tau\\
       &\leq \frac{1}{C_0}\{\eta_j(t)(-\Delta)^s\phi_j(x)+\sup_{x\in\R^n}\phi_j(x)\partial_t^s\eta_j(t)\}. \label{sup-est}
\end{align}
  Note that \begin{equation}
  \label{D-eta}
      \partial_t^s\eta_j(t)=j^{-\gamma s}\partial_t^s \eta(j^{-\gamma}t)=C_sj^{-\gamma s}(j^{-\gamma}t_+)^{2-s}=C_sj^{-2\gamma }(t_+)^{2-s}.
      \end{equation}
Then by  \eqref{ex1},\eqref{phij}, \eqref{def-eta},  \eqref{sup-est}, and \eqref{D-eta}, we have
\begin{equation}\label{sup-est1}
    \ma{s}w_j(x,t)\leq \frac{(-\Delta)^s\phi_j(x)}{C_0}(j^{-2\gamma}t_+^2+1)+\frac{C_s}{C_0}j^{\alpha-2\gamma}t_+^{2-s}\to -1, \ \mb{as}\ j\to\infty.
\end{equation}
On the other hand, similar to the argument of \eqref{sup-est}, we also have
\begin{align}\nonumber
    \ma{s}w_j(x,t)&\geq \frac{1}{C_0}\{\eta_j(t)(-\Delta)^s\phi_j(x)+\inf_{x\in\R^n}\phi_j(x)\partial_t^s\eta_j(t)\}\\
    &\geq \frac{(-\Delta)^s\phi_j(x)}{C_0}(j^{-2\gamma}t_+^2+1) \to -1,\ \mb{as}\ j\to\infty.\label{sub-est}
\end{align}
Combining \eqref{sup-est1} and \eqref{sub-est}, we obtain
 \begin{equation*}
 \ma{s}w_j(x,t)\to -1, \ \mb{as}\ j\to\infty.
\end{equation*}
Thus , we  verify \eqref{ex2} and  complete the proof of Theorem \ref{main thm2}.

  \end{proof}

{\bf{Acknowledgements.}} Chen is
 partially supported by MPS Simons Foundation 847690. 
 
 Guo is partially supported by  National Natural Science
Foundation of China (NSFC Grant No. 12501145), the Postdoctoral Fellowship Program of CPSF (No.GZC20252004),
the Natural Science Foundation of Shanghai (No.25ZR1402207), and the China Postdoctoral
Science Foundation (No.2025T180838 and 2025M773061).
 
 Li and Ouyang is partially supported by National Natural Science
Foundation of China (NSFC Grant No. W2531006, 12031012, and 11831003) and the Institute of Modern Analysis-A Frontier Research Center of Shanghai.
 \medskip

{\bf{Date availability statement:}} Data will be made available on reasonable request.
\medskip

{\bf{Conflict of interest statement:}} There is no conflict of interest.

\bigskip

Wenxiong Chen

Department of Mathematical Sciences

Yeshiva University

New York, NY, 10033, USA

wchen@yu.edu
\medskip

Yahong Guo

School of Mathematical Sciences

Shanghai Jiao Tong University

Shanghai, 200240, P.R. China

yhguo@sjtu.edu.cn
\medskip

Congming Li

School of Mathematical Sciences

Shanghai Jiao Tong University

Shanghai, 200240, P.R. China

congming.li@sjtu.edu.cn
\medskip

Yugao Ouyang

School of Mathematical Sciences

Shanghai Jiao Tong University

Shanghai, 200240, P.R. China

ouyang1929@sjtu.edu.cn

\end{document}